\newtheorem{theorem}{Theorem}[section]
\newtheorem{lemma}[theorem]{Lemma}
\newtheorem{proposition}[theorem]{Proposition}
\theoremstyle{definition}
\newtheorem{definition}[theorem]{Definition}
\theoremstyle{remark}
\newcommand{\comment}[1]{}
\numberwithin{equation}{section}
\let\epsilon=\varepsilon
\def\@maketitle{%
  \newpage
  \null
  \vskip 2em%
  \begin{center}%
  \let \footnote \thanks
    {\Large\bfseries \@title \par}%
    \vskip 1.5em%
    {\normalsize
      \lineskip .5em%
      \begin{tabular}[t]{c}%
        \@author
      \end{tabular}\par}%
    \vskip 1em%
    {\normalsize \@date}%
  \end{center}%
  \par
  \vskip 1.5em}
\begin{document}

\title{\sc \huge the horofunction boundary of the infinite dimensional hyperbolic space}

\author{Floris Claassens%
\thanks{Email: \texttt{f.c.claassens@kent.ac.uk; Supported by EPSRC grant EP/M508068/1}}}
\affil{School of Mathematics, Statistics \& Actuarial Science,
University of Kent, Canterbury, CT2 7NX, United Kingdom}

\maketitle
\date{}

\begin{abstract}
In this paper we give a complete description of the horofunction boundary of the infinite dimensional real hyperbolic space, and characterise its Busemann points.
\end{abstract}

{\small {\bf Keywords:} Infinite dimensional hyperbolic space, horofunction boundary, Busemann points.}

{\small {\bf Subject Classification:} Primary 51M10; Secondary 53C23}


\section{Introduction}
In recent years the study of the geometry of the infinite dimensional real hyperbolic space $\mathbb{H}^{\infty}$ has gained significant momentum since it was popularised by Gromov in \cite{GBook}, see \cite{BIM}, \cite{DSUBook}, \cite{DSU}, \cite{LRW1} and \cite{MP}. In this paper we study the horofunction boundary of $\mathbb{H}^\infty$ and its Busemann points. Horofunctions are a fundamental tool in metric geometry and have found applications in numerous fields including, geometric group theory \cite{DKBook}, ergodic theory \cite{GK, KM}, real and complex dynamics \cite{AR, Bea2, CR, Ka, LLNW}, nonlinear operator theory \cite{LNBook, GV1} and metric and non-commutative geometry \cite{Rief}.
The Busemann points were introduced by Rieffel in \cite{Rief}. They are horofunctions that are the limits of so called almost geodesics. These special horofunctions are known to be particularly useful in the studying isometric problems in metric spaces, see for instance \cite{LW, Wal, Wal2}. 
The main goal of this article is to give a complete description of the horofunction boundary and its Busemann points of $\mathbb{H}^{\infty}$. Before we state the result let us briefly recall the hyperboloid model and Klein's model of the infinite dimensional real hyperbolic space.

Let $(H,\langle\cdot,\cdot\rangle)$ be an infinite dimensional Hilbert space and let $V=\mathbb{R}\oplus H$. Let $Q:V\rightarrow\mathbb{R}$ be the quadratic form,
$$Q((\lambda,x))=\lambda^{2}-\langle x,x\rangle$$
for $(\lambda,x)\in V$. The vector space $V$ has a natural cone
$$V_{+}=\{(\lambda,x)\in V:\|x\|<\lambda\}.$$
Let $B:V\times V\rightarrow\mathbb{R}$ be the bilinear form associated with $Q$, 
$$B((\lambda,x),(\mu,y))=\lambda\mu-\langle x,y\rangle$$
for $(\lambda,x),(\mu,y)\in V$. The cone $V_{+}$ can be equipped with a pseudometric $d_{h}:V_{+}\times V_{+}\rightarrow[0,\infty)$ given by
\begin{equation}\label{hyp} 
\cosh(d_{h}(u,v))=\frac{B(u,v)}{\sqrt{Q(u)Q(v)}}
\end{equation}
for $u,v\in V_{+}$, which is a metric between pairs of rays in $V_{+}$. If we restrict $d_{h}$ to a hyperboloid
$$\textbf{H}=\{u\in V_{+}:Q(u)=1\},$$
we obtain $\mathbb{H}^{\infty}=(\textbf{H},d_{h})$ the \textit{hyperboloid model} of the infinite dimensional real hyperbolic space. In this paper we will work with {\em Klein's model }of the infinite dimensional real hyperbolic space $\mathbb{H}^{\infty}=(\textbf{D},d_{h})$, which is defined on the disc
$$\textbf{D}=\{(\lambda,x)\in V:\lambda=1\text{ and }\|x\|<1\}.$$
On $\textbf{D}$ the metric $d_{h}$ coincides with Hilbert's (cross-ratio) metric which is defined as follows. Let $u$ and $v$ be different elements of $\textbf{D}$ and let $l_{u,v}$ be the line through $u$ and $v$. Let $u'$ and $v'$ be the intersection of $l_{x,y}$ and the boundary of $\textbf{D}$ such that $u$ is between $u'$ and $v$ and $v$ is between $u$ and $v'$. Hilbert's metric is given by
$$\delta(u,v)=\frac{1}{2}\log\left(\frac{\|u'-v\|\|v'-u\|}{\|u'-u\|\|v'-v\|}\right)\qquad(u,v\in\textbf{D}).$$
In the study of Hilbert's metric the factor $\frac{1}{2}$ is usually ignored as it plays no role, except for fixing the curvature of the space.

For finite dimensional real hyperbolic spaces $\mathbb{H}^{n}$ it is well known that $\partial\mathbb{H}^{n}$ coincides with the horofunction boundary. In infinite dimensions this is no longer the case. Indeed we will show that in $\mathbb{H}^\infty$ the Busemann points correspond to $\partial \mathbb{H}^\infty$, and that there many horofunctions that are not Busemann points. This phenomenon is caused by the fact that closed balls in $\mathbb{H}^\infty$ are no longer compact. In fact, we will prove the following theorem.

\begin{theorem}\label{Main theorem}
The horofunctions of $\mathbb{H}^{\infty}=(\textbf{D},d_{h})$ are precisely the functions of the form
$$\xi(v)=\log\left(\frac{B(\hat{u},v)+\sqrt{(B(\hat{u},v))^{2}-Q(v)(1-r^{2})}}{(1+r)\sqrt{Q(v)}}\right)\qquad(v\in \textbf{D})$$
where $0<r\leq1$ and $\hat{u}\in\textbf{D}$ such that $0\leq 1-r^{2}<Q(\hat{u})$, or, $r=1$ and $\hat{u}\in\partial\textbf{D}$. Furthermore, $\xi$ is a Busemann point if and only if $r=1$ and $\hat{u}\in\partial\textbf{D}$, in which case
$$\xi(v)=\frac{1}{2}\log\left(\frac{B(\hat{u},v)^{2}}{Q(v)}\right)\qquad(v\in \textbf{D}).$$
\end{theorem}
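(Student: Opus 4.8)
The plan is to fix the basepoint $o=(1,0)\in\textbf{D}$ and to begin by recording an explicit formula for $\iota(u)(v):=d_h(v,u)-d_h(o,u)$. Writing $\cosh^{-1}(t)=\log\!\bigl(t+\sqrt{t^2-1}\bigr)$ in \eqref{hyp} and using $Q(o)=1$, $B(o,u)=1$ and $Q(u)=1-\|x_u\|^2$ for $u=(1,x_u)$, one obtains
\[
\iota(u)(v)=\log\!\left(\frac{B(u,v)+\sqrt{B(u,v)^2-Q(u)Q(v)}}{\bigl(1+\sqrt{1-Q(u)}\bigr)\sqrt{Q(v)}}\right)\qquad(u,v\in\textbf{D}),
\]
which depends on $u$ only through the affine map $v\mapsto B(u,v)$ and the number $Q(u)$, and is jointly continuous in this data. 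Everything then reduces to computing the limits of this expression along sequences in $\textbf{D}$ and deciding which limits are internal points $\iota(w)$.

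First I treat the description of the horofunctions. Let $\xi$ be a horofunction, realised as the limit of $\iota(u_n)$, $u_n=(1,x_n)$, with $\xi\notin\iota(\textbf{D})$. Passing to a subsequence, I may assume $\|x_n\|\to r\in[0,1]$ and, by weak sequential compactness of the closed balls of $H$, $x_n\rightharpoonup x$; put $\hat u=(1,x)$. Then $B(u_n,v)\to B(\hat u,v)$ for each $v$, $Q(u_n)=1-\|x_n\|^2\to 1-r^2$, and the term under the inner square root tends to $B(\hat u,v)^2-(1-r^2)Q(v)$, which is nonnegative: by the reverse Cauchy--Schwarz inequality $B(\hat u,v)^2\ge Q(\hat u)Q(v)$, while $Q(\hat u)=1-\|x\|^2\ge 1-r^2$ by weak lower semicontinuity of the norm. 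Hence $\xi$ has the form in the theorem with $0\le 1-r^2\le Q(\hat u)$. If $1-r^2=Q(\hat u)$ then $\|x\|=r$, so $x_n\to x$ in norm, $u_n\to\hat u$ in $d_h$ and $\xi=\iota(\hat u)$ is internal; thus genuine horofunctions have $1-r^2<Q(\hat u)$, or the limiting case $Q(\hat u)=0$, i.e.\ $r=1$ and $\hat u\in\partial\textbf{D}$. Conversely, every such pair is realised: if $\|x\|<r$ pick an orthonormal sequence $(e_n)$ in $H$ with $e_n\perp x$ and set $x_n=x+s_ne_n$, with $s_n\equiv\sqrt{r^2-\|x\|^2}$ if $r<1$ and $s_n\uparrow\sqrt{1-\|x\|^2}$ if $r=1$; if $r=1$ and $\hat u=(1,\omega)\in\partial\textbf{D}$ set $x_n=(1-\tfrac1n)\omega$; in both cases the displayed formula gives $\iota(u_n)\to\xi$. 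Finally I must check these $\xi$ are not internal. Since $\tau(v):=B(\hat u,v)/\sqrt{Q(v)}=\sqrt{Q(\hat u)}\cosh d_h(\hat u,v)$ ranges over $[\sqrt{Q(\hat u)},\infty)$ and $\xi(v)=\log\tfrac1{1+r}\bigl(\tau(v)+\sqrt{\tau(v)^2-(1-r^2)}\bigr)$ is strictly increasing in $\tau(v)$, when $Q(\hat u)=0$ we have $\inf_v\xi(v)=-\infty$ (not attained), so $\xi$ cannot be internal; and when $0\le 1-r^2<Q(\hat u)$, the infimum of $\xi$ is attained only at $\hat u$, so $\xi=\iota(w)$ would force $w=\hat u$ and hence $\xi(v)-\xi(\hat u)=d_h(v,\hat u)$ for all $v$; letting $\tau(v)\to\infty$ and comparing the two sides gives $\xi(\hat u)=\log\tfrac{\sqrt{Q(\hat u)}}{1+r}$, contradicting $\xi(\hat u)=\log\tfrac1{1+r}\bigl(\sqrt{Q(\hat u)}+\sqrt{Q(\hat u)-(1-r^2)}\bigr)>\log\tfrac{\sqrt{Q(\hat u)}}{1+r}$.

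For the Busemann points I use two observations. If $\gamma$ is an almost-geodesic then $d_h(o,\gamma(t))-t\to0$, so $d_h(o,\gamma(t))\to\infty$, and the defining inequality of an almost-geodesic yields $\xi(\gamma(s))=\lim_t\bigl(d_h(\gamma(s),\gamma(t))-d_h(o,\gamma(t))\bigr)=-d_h(o,\gamma(s))+o(1)$ as $s\to\infty$; hence every Busemann point is unbounded below. On the other hand, reverse Cauchy--Schwarz gives, for $1-r^2<Q(\hat u)$,
\[
\xi(v)\ \ge\ \log\frac{B(\hat u,v)}{(1+r)\sqrt{Q(v)}}\ \ge\ \log\frac{\sqrt{Q(\hat u)}}{1+r}\ >\ -\infty,
\]
so none of the horofunctions with $r<1$, nor those with $r=1$ and $\hat u\in\textbf{D}$, is a Busemann point. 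Conversely, when $\hat u=(1,\omega)\in\partial\textbf{D}$ the horofunction $\xi=\tfrac12\log\bigl(B(\hat u,\cdot)^2/Q(\cdot)\bigr)$ is Busemann: in Klein's model geodesics are Euclidean segments, so $\gamma(t)=(1,\tanh(t)\,\omega)$ is the unit-speed geodesic ray from $o$ towards $\omega$ (indeed $d_h(o,(1,s\omega))=\tfrac12\log\tfrac{1+s}{1-s}$), a geodesic ray is an almost-geodesic, and the displayed formula (with $x_n=\tanh(t)\omega$, so $\|x_n\|\to1$ and $B(u_n,v)\to B(\hat u,v)$) gives $\iota(\gamma(t))\to\xi$. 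Together with the first part, this pins down the Busemann points exactly.

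The main obstacle is the step showing that the weak-limit functions constructed above genuinely lie outside $\iota(\textbf{D})$: this is precisely where the non-compactness of closed balls in $\mathbb{H}^{\infty}$ enters, and it requires the comparison-of-minima argument rather than an informal ``they look different''. A secondary point requiring care is to carry out the limit computations in the topology of uniform convergence on bounded subsets used to define the horofunction compactification, and to ensure the expression under the square root stays nonnegative in the limit, which rests on the reverse Cauchy--Schwarz inequality for $B$ together with weak lower semicontinuity of the Hilbert norm. Once these are settled, the Busemann-point part is essentially bookkeeping.
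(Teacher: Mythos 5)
Your strategy for the classification part is essentially the paper's: you derive the same closed form for $i(u)(v)$ (the paper gets it from the gauge functions $M(\cdot/\cdot)$ of Proposition \ref{Gauge spinfactor}, you from $\cosh^{-1}$), pass to weak limits of the centres, and realise every admissible pair $(\hat u,r)$ by the same explicit sequences. The one genuine gap is that in both ``only if'' directions you work with sequences and a parametrised ray where the definitions force nets. You take an arbitrary horofunction to be ``realised as the limit of $\iota(u_n)$'' and pass to subsequences via weak sequential compactness; but $\mathbb{H}^\infty$ is not proper, so the horofunction compactification is not metrizable and a boundary point is a priori only a limit of a \emph{net} in $i(\textbf{D})$ --- the paper states this explicitly and extracts a subnet along which $x_\alpha$ converges weakly (weak compactness of the closed ball suffices) and $\|x_\alpha\|$ converges. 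That every horofunction of this space happens to be a sequential limit is a consequence of the final classification, not something you may assume at the outset; your computation does survive verbatim once sequences are replaced by nets. The same issue recurs in the Busemann part: the paper's almost geodesics are nets, and your step ``$d_h(o,\gamma(t))-t\to 0$, hence $d_h(o,\gamma(t))\to\infty$'' presupposes Rieffel's parametrised notion. With the net definition you must rule out bounded almost geodesic nets, which is exactly Proposition \ref{complete Busemann} (a bounded almost geodesic net in the complete space $\mathbb{H}^\infty$ converges, so the limit function is internal); once that is invoked, your inequality $\xi(x_\alpha)\le -d_h(o,x_\alpha)+\varepsilon$ along an unbounded tail does give that every Busemann point is unbounded below.

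Modulo that repair the argument is correct, and two steps genuinely differ from the paper. For excluding Busemann points with $\hat u\in\textbf{D}$ you combine ``Busemann $\Rightarrow$ unbounded below'' with the reverse Cauchy--Schwarz bound $\xi\ge\log\bigl(\sqrt{Q(\hat u)}/(1+r)\bigr)$, whereas the paper runs the almost-geodesic inequality through the gauge formulas and reaches the contradiction $e^{\varepsilon}\ge\lim_\alpha(1-\langle x_\alpha,\hat x\rangle)/(1-\|x_\alpha\|)=\infty$; your route is more conceptual and isolates a reusable metric fact, the paper's stays entirely at the level of its explicit formulas. Second, you verify explicitly that the limit functions with $0\le 1-r^2<Q(\hat u)$, and those with $Q(\hat u)=0$, are not of the form $i(w)$ (comparison of minimisers plus the asymptotic comparison as $\tau(v)\to\infty$, and unboundedness below, respectively); the paper only records the converse observation that $r=\|\hat x\|<1$ forces $\xi=i(1,\hat x)$ and leaves the non-internality of the constructed limits implicit, so this extra check is a point in your favour rather than a defect.
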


\section{Preliminaries}
 There is another way to express the hyperbolic distance $d_h$ on $V_+$ by using the so called Birkhoff's version Hilbert's metric, \cite{Birk}, which will be convenient for our purposes. 
This metric is defined in terms of the order structure on $V=\mathbb{R}\oplus H$ induced by the closure of $V_{+}$. Recall that the closed cone $\overline{V}_{+}$ defines a partial order structure on $V$ by $u\leq v$ if $v-u\in\overline{V}_{+}$.

Birkhoff's version of Hilbert's metric $V_+$ uses the \textit{gauge function}
$$M(u/v)=\inf\{\beta>0:u\leq\beta v\}\qquad(u,v\in V_{+})$$
and is defined by
$$\rho(u,v)=\frac{1}{2}\log(M(u/v)M(v/u))\qquad(u,v\in V_{+}).$$
So $d_h=\rho$ on $V_+$, see \cite{LN2} for details. Birkhoff's version of Hilbert's metric was popularised by Bushell \cite{Bush} and can be defined on the interior of any closed cone on a normed space. 

Let us briefly recall the construction of the horofunction boundary. Let $(X,d)$ a metric space and let $b\in X$ be a base point. Consider the natural embedding $i:X\rightarrow C(X)$ given by
$$i(x)(y)=d(x,y)-d(x,b)\qquad(x,y\in X)$$
where $C(X)$ is equipped with the topology of compact convergence; see \cite[\S 46]{MBook}. By the triangle inequality we have
$$|i(x)(y)-i(x)(y')|=|d(y,x)-d(y',x)|\leq d(y,y'),$$
so $i(X)$ is equicontinuous. By the same methods we also find for all $x,y\in X$ that $|i(x)(y)|\leq d(y,b)$, hence $i(X)(y)=\{i(x)(y):x\in X\}$ has compact closure in $\mathbb{R}$. By Ascoli's Theorem; see \cite[Theorem 47.1]{MBook} we find that $i(X)$ has compact closure in $C(X)$. The closure $\overline{i(X)}$ is called the \textit{horofunction compactification} of $(X,d)$. The set $\overline{i(X)}\setminus i(X)$ is called the \textit{horofunction boundary} of $(X,d)$ and its elements are called \textit{horofunctions}. The horofunction compactification is metrizable if $(X,d)$ is proper, in which case each horofunction is the limit of a sequence $(i(x_n))$ in $i(X)$. If, however, $X$ is not proper, horofunctions need not be limits of sequences, but they are limits of nets. 

Inside the horofunction boundary there are special horofunctions called Busemann points, which are limits of so called almost geodesics. 
\begin{definition}
A net $(x_{\alpha})$ in a metric space $(X,d)$ is \textit{almost geodesic} if, for all $\varepsilon>0$ there exists an index $A$ such that for all $\alpha'\geq\alpha\geq A$ we have
$$d(b,x_{\alpha'})\geq d(b,x_{\alpha})+d(x_{\alpha},x_{\alpha'})-\varepsilon.$$
\end{definition}

A horofunction $\xi\in\overline{i(X})\setminus i(X)$ is called a \textit{Busemann point} if there exists an almost geodesic net $(x_{\alpha})$ in $X$ such that $\xi=\lim_{\alpha}i(x_{\alpha})$. 

Note that in finite dimensional hyperbolic space a net $(x_{\alpha})$ in $\textbf{D}$ will only give rise to a horofunction if $(x_{\alpha})$ is unbounded, as $\overline{\textbf{D}}$ is norm compact. For $\mathbb{H}^{\infty}$ there are, however, horofunctions which are limits of bounded nets. Nevertheless the Busemann points are always limits of unbounded nets as observed in \cite{Wal}. For convenience of the reader we have included the proof. 

\begin{proposition}\label{complete Busemann}
Let $(x_{\alpha})$ be a net in a complete metric space $X$. If $(x_{\alpha})$ is almost geodesic and bounded, then $(x_{\alpha})$ converges to some $x\in X$.
\end{proposition}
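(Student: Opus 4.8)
The plan is to show that an almost geodesic bounded net is Cauchy in the sense of nets, so that completeness of $X$ yields a limit. The starting observation is that for an almost geodesic net, the quantity $d(b,x_{\alpha})$ is ``almost monotone'': for each $\varepsilon>0$ there is an index $A$ with $d(b,x_{\alpha'})\geq d(b,x_{\alpha})-\varepsilon$ for all $\alpha'\geq\alpha\geq A$ (take the defining inequality and drop the nonnegative term $d(x_{\alpha},x_{\alpha'})$). Since the net is bounded, $\{d(b,x_{\alpha})\}$ is a bounded subset of $\mathbb{R}$, so it has a finite supremum $L$. The almost-monotonicity then forces $d(b,x_{\alpha})\to L$ along the net: given $\varepsilon>0$, pick any $\alpha_0\geq A$ with $d(b,x_{\alpha_0})>L-\varepsilon$; then for all $\alpha\geq\alpha_0$ we get $d(b,x_{\alpha})\geq d(b,x_{\alpha_0})-\varepsilon>L-2\varepsilon$, while $d(b,x_{\alpha})\leq L$ always.

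With this in hand, I would feed convergence of $d(b,x_{\alpha})$ back into the almost geodesic inequality to control $d(x_{\alpha},x_{\alpha'})$. Rearranging the definition gives
$$d(x_{\alpha},x_{\alpha'})\leq d(b,x_{\alpha'})-d(b,x_{\alpha})+\varepsilon$$
for all $\alpha'\geq\alpha\geq A$. Fix $\varepsilon>0$ and choose $A$ as above; choose further $A'\geq A$ so that $|d(b,x_{\beta})-L|<\varepsilon$ for all $\beta\geq A'$. Then for all $\alpha'\geq\alpha\geq A'$ we have $d(b,x_{\alpha'})-d(b,x_{\alpha})<2\varepsilon$, hence $d(x_{\alpha},x_{\alpha'})<3\varepsilon$. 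This shows $(x_{\alpha})$ is a Cauchy net.

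Finally I would invoke completeness: a complete metric space is complete with respect to Cauchy nets as well as Cauchy sequences (a Cauchy net has the same limit as any of its cofinal Cauchy sequences, and one can always extract such a sequence by choosing indices $A_1\leq A_2\leq\cdots$ witnessing $\varepsilon=1/n$), so $(x_{\alpha})$ converges to some $x\in X$. The only mildly delicate point is the passage from Cauchy sequences to Cauchy nets; everything else is a direct manipulation of the defining inequality. I do not expect any real obstacle here — the essential content is that almost-monotonicity plus boundedness pins down $d(b,x_{\alpha})$, and that in turn collapses the pairwise distances.
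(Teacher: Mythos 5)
Your overall strategy is exactly the paper's: first show that $d(b,x_{\alpha})$ converges, then feed this back into the almost-geodesic inequality to conclude that $(x_{\alpha})$ is a Cauchy net, and finish by completeness (your extraction of a Cauchy sequence indexed by witnesses for $\varepsilon=1/n$ is the standard way to justify that last step, and is fine). However, the first part has a genuine flaw: you take $L=\sup_{\alpha}d(b,x_{\alpha})$ over the \emph{whole} net and then claim you can ``pick any $\alpha_{0}\geq A$ with $d(b,x_{\alpha_{0}})>L-\varepsilon$''. Such an index need not exist, because the almost-geodesic condition says nothing about terms with $\alpha\not\geq A$, and those early terms may be the only ones whose distance to $b$ is close to $L$. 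Concretely, take a sequence with $d(b,x_{1})=10$ and, for $n\geq 2$, the points $x_{n}$ on a geodesic ray issuing from $b$ with $d(b,x_{n})=1-\frac{1}{n}$: this is bounded and almost geodesic (take $A=2$), $L=10$, but $d(b,x_{n})\to 1$, and for $\varepsilon<9$ no admissible $\alpha_{0}$ exists. So the distance net need not converge to the global supremum, and your later choice of $A'$ with $|d(b,x_{\beta})-L|<\varepsilon$ for $\beta\geq A'$ is likewise unavailable in general.

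The gap is local and is repaired exactly as in the paper: replace $L$ by the tail supremum $r_{A}=\sup_{\alpha\geq A}d(b,x_{\alpha})$, where $A$ is the index furnished by the almost-geodesic condition for the given $\varepsilon$. An $\alpha_{0}\geq A$ with $d(b,x_{\alpha_{0}})>r_{A}-\varepsilon$ then exists by definition of the supremum over the tail, and your computation gives $r_{A}\geq d(b,x_{\alpha})>r_{A}-2\varepsilon$ for all $\alpha\geq\alpha_{0}$, so $(d(b,x_{\alpha}))$ is a Cauchy net converging to some $r$ (which need not equal $L$). Your second step only uses convergence of $d(b,x_{\alpha})$ to \emph{some} limit, so with $L$ replaced by $r$ it goes through verbatim, and the remainder of your argument is correct.
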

\begin{proof}
Let $b$ be a base point. The first step is to prove that $d(x_{\alpha},b)$ converges to some $r\in\mathbb{R}$. To see this we define for an index $A$ the supremum $r_{A}=\sup_{\alpha\geq A}d(x_{\alpha},b)$ which exists as the net is bounded. Let $\varepsilon>0$ and let $A$ be an index such that for all $\alpha'\geq\alpha\geq A$ we have
$$d(x_{\alpha'},b)\geq d(x_{\alpha},b)+d(x_{\alpha'},x_{\alpha})-\varepsilon.$$
Let $\alpha_{A}\geq A$ be such that $0\leq r_{A}-d(x_{\alpha_{A}},b)<\varepsilon$. Then for all $\alpha'\geq\alpha_{A}$ we have
$$r_{A}\geq d(x_{\alpha'},b)\geq d(x_{\alpha_{A}},b)+d(x_{\alpha'},x_{\alpha_{A}})-\varepsilon\geq d(x_{\alpha_{A}},b)-\varepsilon\geq r_{A}-2\varepsilon.$$
So for all $\alpha',\alpha\geq\alpha_{A}$ we find that $|d(x_{\alpha'},b)-d(x_{\alpha},b)|\leq 2\varepsilon$.
Hence $(d(x_{\alpha},b))$ is a Cauchy net from which it follows that $\lim_{\alpha}d(x_{\alpha},b)=r$ for some $r\in\mathbb{R}$.\\

Now let $\varepsilon>0$ and let $A$ be an index such that for all $\alpha'\geq\alpha\geq A$ we have $|r-d(x_{\alpha},b)|<\varepsilon$ and
$$d(x_{\alpha'},b)\geq d(x_{\alpha},b)+d(x_{\alpha'},x_{\alpha})-\varepsilon.$$
It follows that
$$d(x_{\alpha'},x_{\alpha})\leq d(x_{\alpha'},b)-d(x_{\alpha},b)+\varepsilon<3\varepsilon$$
Hence $(x_{\alpha})$ is a Cauchy net. The proposition follows by completeness.\\
\end{proof}

\section{Classification of the horofunction boundary of $\mathbb{H}^{\infty}$}
To prove Theorem \ref{Main theorem}, we will first calculate the gauge functions.

\begin{proposition}\label{Gauge spinfactor}
Let $(H,\langle\cdot,\cdot\rangle)$ be a Hilbert space and let $V=\mathbb{R}\oplus H$. For all $(\mu,x),(\gamma,y)\in V_{+}$ we have
$$M((\mu,x)/(\gamma,y))=\frac{\gamma\mu-\langle x,y\rangle+\sqrt{(\gamma\mu-\langle x,y\rangle)^{2}-(\mu^{2}-\|x\|^{2})(\gamma^{2}-\|y\|^{2})}}{\gamma^{2}-\|y\|^{2}}.$$
\end{proposition}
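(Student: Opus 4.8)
The plan is to unwind the definition of the gauge function directly. Write $u=(\mu,x)$ and $v=(\gamma,y)$. The relation $u\le\beta v$ means $\beta v-u=(\beta\gamma-\mu,\beta y-x)\in\overline{V}_+$, which by the description of the closed cone is exactly the scalar inequality $\|\beta y-x\|\le\beta\gamma-\mu$. In particular every admissible $\beta$ satisfies $\beta\ge\mu/\gamma$, and for such $\beta$ both sides are non-negative, so we may square without losing equivalence. Hence $M(u/v)$ is the infimum of those $\beta\ge\mu/\gamma$ for which
$$p(\beta):=(\gamma^2-\|y\|^2)\beta^2-2(\gamma\mu-\langle x,y\rangle)\beta+(\mu^2-\|x\|^2)\ \ge\ 0,$$
and since $(\gamma,y)\in V_+$ gives $\gamma^2-\|y\|^2>0$, the polynomial $p$ is a genuine upward-opening parabola.

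The next step is to locate $\mu/\gamma$ relative to the roots of $p$. A short computation gives
$$p(\mu/\gamma)=-\Big(\tfrac{\mu}{\gamma}\|y\|-\|x\|\Big)^2+\tfrac{2\mu}{\gamma}\big(\langle x,y\rangle-\|x\|\,\|y\|\big),$$
and both terms on the right are $\le0$: the first trivially, the second by the Cauchy--Schwarz inequality $\langle x,y\rangle\le\|x\|\,\|y\|$. So $p(\mu/\gamma)\le0$. This one inequality does everything we need: an upward parabola that attains a non-positive value must have two real roots $\beta_-\le\beta_+$, so the discriminant $(\gamma\mu-\langle x,y\rangle)^2-(\gamma^2-\|y\|^2)(\mu^2-\|x\|^2)$ is non-negative and the square root in the claimed formula makes sense; and moreover $\mu/\gamma$ lies in $[\beta_-,\beta_+]$, the interval on which $p\le0$. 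Consequently $\{\beta\ge\mu/\gamma:p(\beta)\ge0\}=[\beta_+,\infty)$, and since $p(\beta_+)=0$ and $\beta_+\ge\mu/\gamma>0$ the infimum is attained and equals
$$\beta_+=\frac{(\gamma\mu-\langle x,y\rangle)+\sqrt{(\gamma\mu-\langle x,y\rangle)^2-(\gamma^2-\|y\|^2)(\mu^2-\|x\|^2)}}{\gamma^2-\|y\|^2},$$
which is precisely the asserted expression for $M(u/v)$.

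The argument is essentially a routine optimisation; the only delicate point is the passage between the vector inequality $\|\beta y-x\|\le\beta\gamma-\mu$ and its squared form, which is why one must carry the constraint $\beta\ge\mu/\gamma$ all the way through. The genuine content is the sign of $p(\mu/\gamma)$: it is what forces $\mu/\gamma$ to sit between the two roots and thereby singles out $\beta_+$ (rather than $\beta_-$, or the endpoint $\mu/\gamma$ itself) as the infimum. I expect the bookkeeping around these sign conditions, not any conceptual difficulty, to be the main thing one has to be careful about.
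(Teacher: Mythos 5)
Your reduction to the quadratic is the same as the paper's, but the way you decide that the infimum is the larger root $\beta_+$ is genuinely different, and cleaner. The paper plugs the explicit formula for $\beta_-$ back into the side condition and shows $\gamma\beta_--\mu\le 0$ by rationalising and splitting into the cases $\mu\|y\|<\gamma\|x\|$ and $\mu\|y\|\ge\gamma\|x\|$, each handled with Cauchy--Schwarz; it never comments on why the discriminant is nonnegative. You instead evaluate $p$ at the constraint threshold $\mu/\gamma$ and observe $p(\mu/\gamma)\le 0$, which simultaneously shows that the roots are real (so the square root in the statement makes sense) and that $\mu/\gamma$ lies between them, so that on $[\mu/\gamma,\infty)$ the inequality $p\ge 0$ first holds at $\beta_+$. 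That is shorter, avoids the case split, and adds the discriminant check the paper leaves implicit. Your algebra for $p(\mu/\gamma)$ is correct; in fact it is even simpler than your decomposition suggests, since at $\beta=\mu/\gamma$ the term $(\gamma\beta-\mu)^2$ vanishes, giving the identity $p(\mu/\gamma)=-\left\|\tfrac{\mu}{\gamma}y-x\right\|^{2}\le 0$ with no appeal to Cauchy--Schwarz at all.

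One point to tighten: from ``$\mu/\gamma\in[\beta_-,\beta_+]$'' alone, the set $\{\beta\ge\mu/\gamma:p(\beta)\ge 0\}$ need not equal $[\beta_+,\infty)$ --- if $\mu/\gamma=\beta_-<\beta_+$ it would also contain the isolated point $\beta_-$, and the infimum would then be $\beta_-$, not $\beta_+$. This loophole does not actually occur, but you should say why: by the identity above, $p(\mu/\gamma)=0$ forces $x=\tfrac{\mu}{\gamma}y$, and a direct computation then shows the discriminant vanishes, so $\beta_-=\beta_+=\mu/\gamma$ and the conclusion is unaffected. With that one sentence added, your argument is complete.
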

\begin{proof}
We know that
\begin{align*}
M((\mu,x)/(\gamma,y))&=\inf\{\beta>0:(\mu,x)\leq\beta(\gamma,y)\}\\
&=\inf\{\beta>0:(\gamma\beta-\mu)^{2}\geq\|\beta y-x\|^{2}\text{ and }\gamma\beta-\mu\geq0\}.
\end{align*}
So we have to solve
$$(\gamma\beta-\mu)^{2}-\|\beta y-x\|^{2}=(\gamma^{2}-\|y\|^{2})\beta^{2}-2(\gamma\mu-\langle x,y\rangle)\beta+(\mu^{2}-\|x\|^{2})=0,$$
which has solutions
$$\beta_{\pm}=\frac{\gamma\mu-\langle x,y\rangle\pm\sqrt{(\gamma\mu-\langle x,y\rangle)^{2}-(\mu^{2}-x^{2})(\gamma^{2}-\|y\|^{2})}}{\gamma^{2}-\|y\|^{2}}.$$
Note though, that
\begin{align*}
\gamma\beta_{-}-\mu&=\gamma\frac{\gamma\mu-\langle x,y\rangle-\sqrt{(\gamma\mu-\langle x,y\rangle)^{2}-(\mu^{2}-\|x\|^{2})(\gamma^{2}-\|y\|^{2})}}{\gamma^{2}-\|y\|^{2}}-\mu\\
&=\gamma\frac{(\gamma\mu-\langle x,y\rangle)^{2}-(\gamma\mu-\langle x,y\rangle)^{2}+(\mu^{2}-\|x\|^{2})(\gamma^{2}-\|y\|^{2})}{(\gamma\mu-\langle x,y\rangle+\sqrt{(\gamma\mu-\langle x,y\rangle)^{2}-(\mu^{2}-\|x\|^{2})(\gamma^{2}-\|y\|^{2})})(\gamma^{2}-\|y\|^{2})}-\mu\\
&=\frac{\gamma(\mu^{2}-\|x\|^{2})}{\gamma\mu-\langle x,y\rangle+\sqrt{(\gamma\mu-\langle x,y\rangle)^{2}-(\mu^{2}-\|x\|^{2})(\gamma^{2}-\|y\|^{2})}}-\mu\\
&\leq\frac{\gamma(\mu^{2}-\|x\|^{2})}{\gamma\mu-\|x\|\|y\|+\sqrt{(\gamma\mu-\|x\|\|y\|)^{2}-(\mu^{2}-\|x\|^{2})(\gamma^{2}-\|y\|^{2})}}-\mu\\
&=\frac{\gamma(\mu^{2}-\|x\|^{2})}{\gamma\mu-\|x\|\|y\|+|\mu\|y\|-\gamma\|x\||}-\mu\\
&=\frac{\mu\|x\|\|y\|-\gamma\|x\|^{2}-|\mu^{2}\|y\|-\mu\gamma\|x\||}{\gamma\mu-\|x\|\|y\|+|\mu\|y\|-\gamma\|x\||}.
\end{align*}
We find that if $\mu\|y\|<\gamma\|x\|$, then clearly $\gamma\beta_{-}-\mu<0$. If $\mu\|y\|\geq\gamma\|x\|$, then consider
\begin{align*}
\gamma\beta_{-}-\mu&\leq\frac{\mu\|x\|\|y\|-\gamma\|x\|^{2}-\mu^{2}\|y\|+\mu\gamma\|x\|}{\gamma\mu-\|x\|\|y\|+\mu\|y\|-\gamma\|x\|}\\
&=\frac{(\mu\|y\|-\gamma\|x\|)(\|x\|-\mu)}{(\gamma+\|y\|)(\mu-\|x\|)}=-\frac{\mu\|y\|-\gamma\|x\|}{\gamma+\|y\|}\leq 0.
\end{align*}
Hence we find that $M((\mu,x)/(\gamma,y))=\beta_{+}$.\\
\end{proof}

For all $u,v\in V_{+}$ we can rewrite this result using the quadratic and bilinear forms as
$$M(u/v)=\frac{B(u,v)+\sqrt{B(u,v)^{2}-Q(u)Q(v)}}{Q(v)}.$$
Note that if $Q(u)=Q(v)=1$, then using Proposition \ref{Gauge spinfactor} we find
\begin{align*}
d_{h}(u,v)&=\log(B(u,v)+\sqrt{B(u,v)^{2}-1})\\
&=\cosh^{-1}(B(u,v)),
\end{align*}
which shows that indeed on $V_{+}$ the hyperbolic metric $d_{h}$ coincides with Birkhoff's version of the Hilbert metric. We also need the following basic result from functional analysis.

\begin{lemma}\label{convergence}
Let $(x_{\alpha})$ be a net in a Hilbert space $H$ such that $x_{\alpha}$ converges in the weak topology to some $x\in H$ and $(\|x_{\alpha}\|)$ converges to some $r\geq0$. Then $r\geq\|x\|$. Moreover, if $r=\|x\|$, then $(x_{\alpha})$ converges to $x$ in the norm topology.
\end{lemma}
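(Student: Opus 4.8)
The plan is to invoke the standard weak lower semicontinuity of the Hilbert space norm, taking a moment's care that everything is phrased for nets rather than sequences. First I would observe that since $x_{\alpha}\to x$ weakly, the net of real numbers $\langle x_{\alpha},x\rangle$ converges to $\langle x,x\rangle=\|x\|^{2}$. Combining this with the Cauchy--Schwarz inequality $\langle x_{\alpha},x\rangle\leq\|x_{\alpha}\|\,\|x\|$ and passing to the limit (the relation $\leq$ is preserved under limits of convergent nets in $\mathbb{R}$, and the product $\|x_{\alpha}\|\,\|x\|\to r\|x\|$) yields $\|x\|^{2}\leq r\|x\|$. If $\|x\|=0$ the desired inequality $r\geq\|x\|$ is immediate; otherwise we divide by $\|x\|$ to conclude $r\geq\|x\|$.

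For the moreover part, I would expand
$$\|x_{\alpha}-x\|^{2}=\|x_{\alpha}\|^{2}-2\langle x_{\alpha},x\rangle+\|x\|^{2}.$$
Each summand on the right-hand side is a convergent net in $\mathbb{R}$: by hypothesis $\|x_{\alpha}\|^{2}\to r^{2}$, by weak convergence $\langle x_{\alpha},x\rangle\to\|x\|^{2}$, and the last term is constant. Since addition and scalar multiplication are continuous on $\mathbb{R}$, it follows that $\|x_{\alpha}-x\|^{2}\to r^{2}-2\|x\|^{2}+\|x\|^{2}=r^{2}-\|x\|^{2}$. When $r=\|x\|$ this limit equals $0$, hence $\|x_{\alpha}-x\|\to0$, i.e.\ $(x_{\alpha})$ converges to $x$ in the norm topology.

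I do not anticipate any genuine obstacle here; the lemma is a routine consequence of Cauchy--Schwarz and the parallelogram-type identity above. The only point worth stating explicitly is that all the manipulations used --- continuity of the arithmetic operations and preservation of $\leq$ under passage to a limit --- are facts about the topological space $\mathbb{R}$ and therefore hold verbatim for nets, so no appeal to sequential arguments (which would be unavailable in the non-metrizable weak topology) is needed.
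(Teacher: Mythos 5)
Your proposal is correct and follows essentially the same argument as the paper: Cauchy--Schwarz combined with weak convergence gives $\|x\|^{2}\leq r\|x\|$, and the expansion of $\|x_{\alpha}-x\|^{2}$ gives norm convergence when $r=\|x\|$. Your extra remarks (the $\|x\|=0$ case and the net-versus-sequence point) are fine but add nothing beyond the paper's proof.
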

\begin{proof}
Note that 
$$\|x\|^{2}=\lim_{\alpha}|\langle x,x_{\alpha}\rangle|\leq\lim_{\alpha}\|x\|\|x_{\alpha}\|=r\|x\|.$$
Now suppose that $r=\|x\|$. Then
$$\lim_{\alpha}\|x-x_{\alpha}\|^{2}=\lim_{\alpha}\|x\|^{2}+\|x_{\alpha}\|^{2}-2\langle x,x_{\alpha}\rangle=0.$$
\end{proof}

Using this we can now characterise the horofunctions of $\mathbb{H}^{\infty}$ as follows.

\begin{theorem}\label{horofunctions spinfactor}
Let $(H,\langle\cdot,\cdot\rangle)$ be an infinite dimensional Hilbert space and let $V=\mathbb{R}\oplus H$. The horofunctions of $\mathbb{H}^{\infty}=(\textbf{D},d_{h})$ are the functions of the following form: 
$$\xi((\gamma,y))=\log\left(\frac{\gamma-\langle \hat{x},y\rangle+\sqrt{(\gamma-\langle \hat{x},y\rangle)^{2}-(\gamma^{2}-\|y\|^{2})(1-r^{2})}}{(1+r)\sqrt{\gamma^{2}-\|y\|^{2}}}\right)\qquad((\gamma,y)\in V)$$ 
where either $\|\hat{x}\|<1$ and $\|\hat{x}\|<r\leq1$ or $\|\hat{x}\|=r=1$. 
\end{theorem}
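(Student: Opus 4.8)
The plan is to turn the gauge computation of Proposition~\ref{Gauge spinfactor} into an explicit formula for the embedded functions $i(u)$ and then study their limits along nets, the two analytic inputs being the weak compactness of the unit ball of $H$ and Lemma~\ref{convergence}. First I would fix the base point $b=(1,0)\in\textbf{D}$. Rewriting Proposition~\ref{Gauge spinfactor} as $M(u/v)=\bigl(B(u,v)+\sqrt{B(u,v)^2-Q(u)Q(v)}\bigr)/Q(v)$ and using $d_h(u,v)=\tfrac12\log\bigl(M(u/v)M(v/u)\bigr)$, a direct computation (first $d_h(u,b)=\log\frac{1+\|x\|}{\sqrt{1-\|x\|^2}}$, then $d_h(u,v)$, then subtract) gives, for $u=(1,x)\in\textbf{D}$ and $v=(\gamma,y)\in V_+$,
$$i(u)(v)=d_h(u,v)-d_h(u,b)=\log\!\left(\frac{\gamma-\langle x,y\rangle+\sqrt{(\gamma-\langle x,y\rangle)^2-(\gamma^2-\|y\|^2)(1-\|x\|^2)}}{(1+\|x\|)\sqrt{\gamma^2-\|y\|^2}}\right),$$
a $0$-homogeneous function of $v$, hence the same object as a function on $\textbf{D}$. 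In particular $i(\textbf{D})$ is exactly the family in the statement with $\hat x=x$ and $r=\|x\|$, i.e.\ the ``diagonal'' case $\|\hat x\|=r<1$; the theorem says passing to the closure enlarges this to $\|\hat x\|<r\le1$ or $\|\hat x\|=r=1$.

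For the inclusion ``every horofunction has this form'', let $\xi=\lim_\alpha i(u_\alpha)$ with $u_\alpha=(1,x_\alpha)\in\textbf{D}$. Since $(x_\alpha)$ lies in the weakly compact closed unit ball of $H$ and $(\|x_\alpha\|)\subseteq[0,1]$, I would pass to a subnet along which $x_\alpha\to\hat x$ weakly and $\|x_\alpha\|\to r$. Then $\langle x_\alpha,y\rangle\to\langle\hat x,y\rangle$ for every $y$, so passing to the limit in the formula above shows $\xi$ has the claimed form with parameters $\hat x,r$; the discriminant stays $\ge0$ in the limit (directly, $(\gamma-\langle\hat x,y\rangle)^2\ge(\gamma-\|\hat x\|\|y\|)^2\ge(\gamma^2-\|y\|^2)(1-\|\hat x\|^2)\ge(\gamma^2-\|y\|^2)(1-r^2)$, using $(\|\hat x\|-\|y\|)^2\ge0$ and $r\ge\|\hat x\|$). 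By Lemma~\ref{convergence} we have $r\ge\|\hat x\|$; and if $r=\|\hat x\|<1$ the same lemma gives $x_\alpha\to\hat x$ in norm, so $u_\alpha\to(1,\hat x)\in\textbf{D}$, whence $\xi=i((1,\hat x))\in i(\textbf{D})$, contradicting that $\xi$ is a horofunction. Since $\|\hat x\|<r$ with $\|\hat x\|=1$ is impossible, this leaves exactly $\|\hat x\|<r\le1$ or $\|\hat x\|=r=1$.

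For the converse, given admissible parameters $(\hat x,r)$ I would exhibit a net in $\textbf{D}$ realising the corresponding $\xi$. If $\|\hat x\|=r=1$, take $x_n=(1-\tfrac1n)\hat x$; if $\|\hat x\|<1$, pick an orthonormal sequence $(e_n)$ orthogonal to $\hat x$ and set $x_n=\hat x+s_ne_n$ with $s_n=\sqrt{r^2-\|\hat x\|^2}$ when $r<1$ and $s_n=\sqrt{1-\|\hat x\|^2-\tfrac1n}$ when $r=1$, so that $x_n\in\textbf{D}$, $x_n\to\hat x$ weakly and $\|x_n\|\to r$. Substituting into the formula of the first paragraph gives pointwise convergence $i((1,x_n))\to\xi$; the delicate point — and the step I expect to be the main obstacle — is upgrading this to convergence in $C(\textbf{D})$, which I would do by noting that every compact $K\subseteq\textbf{D}$ is norm-bounded away from $\partial\textbf{D}$ and that $\sup_{(1,y)\in K}|\langle x_n-\hat x,y\rangle|\to0$ (a weakly null orthonormal sequence is uniformly small on norm-compact sets, by a routine subsequence argument), so the convergence is uniform on $K$. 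Thus $\xi\in\overline{i(\textbf{D})}$. Finally $\xi\notin i(\textbf{D})$: if $\xi=i((1,w))$ then evaluating at $(1,w)$ and $b$ forces $d_h((1,x_n),(1,w))=i((1,x_n))((1,w))-i((1,x_n))(b)\to0$, so $x_n\to w$ in norm, which is impossible since $(x_n)$ is not norm-Cauchy when $r>\|\hat x\|$ and converges to $\hat x\in\partial\textbf{D}$ when $\|\hat x\|=r=1$. Hence $\xi$ is a horofunction, completing the characterization.
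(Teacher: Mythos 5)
Your proposal follows essentially the same route as the paper's proof: the explicit formula for $i((1,x))$ obtained from Proposition \ref{Gauge spinfactor}, the weak-compactness subnet argument combined with Lemma \ref{convergence} for the forward inclusion (including the observation that $r=\|\hat x\|<1$ would force $\xi=i((1,\hat x))$), and the explicit sequences $(1-\tfrac1n)\hat x$, respectively $\hat x+s_ne_n$ with $(e_n)$ orthonormal, for the converse. Your choices $e_n\perp\hat x$ and $s_n=\sqrt{1-\|\hat x\|^2-\tfrac1n}$ when $r=1$ are a sensible refinement, since they guarantee $\|x_n\|<1$ (the paper's own construction is slightly careless on this point), and your upgrade from pointwise to compact convergence is fine (it could alternatively be deduced from the equicontinuity of $i(\textbf{D})$ noted in the preliminaries).

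The one step that does not work as written is your final verification that $\xi\notin i(\textbf{D})$. Since $i((1,x_n))(b)=0$ by definition of the embedding, the quantity $i((1,x_n))((1,w))-i((1,x_n))(b)$ equals $d_h((1,x_n),(1,w))-d_h((1,x_n),b)$, not $d_h((1,x_n),(1,w))$; and if $\xi=i((1,w))$ its limit is $-d_h((1,w),b)$, not $0$ (a sequence escaping to infinity along the ray from $b$ through $w$ produces this same limit without converging to $w$). So the norm convergence $x_n\to w$ you invoke does not follow. The conclusion is nevertheless true, and it is genuinely needed to know that $\xi$ lies in $\overline{i(\textbf{D})}\setminus i(\textbf{D})$ (a point the paper leaves implicit). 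A quick repair is to check that the formula determines its parameters: the derivative at $t=0$ of $t\mapsto\xi((1,tz))$ equals $-\langle\hat x,z\rangle/r$, while for $i((1,w))$ with $w\neq0$ it equals $-\langle w,z\rangle/\|w\|$; equality for all $z$ forces $\hat x/r=w/\|w\|$ and hence $\|\hat x\|=r$, which rules out the case $\|\hat x\|<r$ (the case $w=0$ is excluded by the same first-order comparison at $b$), and in the remaining case $\|\hat x\|=r=1$ one notes $\xi((1,t\hat x))=\tfrac12\log\tfrac{1-t}{1+t}\to-\infty$ as $t\uparrow1$, whereas $i((1,w))\geq-d_h((1,w),b)$ is bounded below.
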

\begin{proof}
Let $((1,x_{\alpha}))$ be a net in $V_{+}$ such that $d_{h}(\cdot,(1,x_{\alpha}))-d_{h}((1,0),(1,x_{\alpha}))$ converges to a horofunction. By taking a subnet we may assume that $(x_{\alpha})$ weakly converges to some $\hat{x}\in H$ as the unit ball is weakly compact and $(\|x_{\alpha}\|)$ converges to some $r\leq1$. Note that by Lemma \ref{convergence}, $r\geq\|\hat{x}\|$. Let $(\gamma,y)\in V_{+}$. Using Proposition \ref{Gauge spinfactor} we find
\begin{align*}
M((1,x_{\alpha})/(1,0))&=1+\|x_{\alpha}\|\\
M((1,0))/(1,x_{\alpha}))&=\frac{1+\|x_{\alpha}\|}{1-\|x_{\alpha}\|^{2}}\\
M((1,x_{\alpha})/(\gamma,y))&=\frac{\gamma-\langle x,y\rangle+\sqrt{(\gamma-\langle x,y\rangle)^{2}-(1-\|x_{\alpha}\|^{2})(\gamma^{2}-\|y\|^{2})}}{\gamma^{2}-\|y\|^{2}}\\
M((\gamma,y))/(1,x_{\alpha}))&=\frac{\gamma-\langle x,y\rangle+\sqrt{(\gamma-\langle x,y\rangle)^{2}-(1-\|x_{\alpha}\|^{2})(\gamma^{2}-\|y\|^{2})}}{1-\|x_{\alpha}\|^{2}}.
\end{align*}
Hence
\begin{align*}
i((1,x_{\alpha}))((\gamma,y))=&\frac{1}{2}\log(M((\gamma,y)/(1,x_{\alpha}))M((1,x_{\alpha})/(\gamma,y)))\\
&\qquad-\frac{1}{2}\log(M((1,0)/(1,x_{\alpha}))M((1,x_{\alpha})/(1,0)))\\
=&\log\left(\frac{\gamma-\langle x_{\alpha},y\rangle+\sqrt{(\gamma-\langle x_{\alpha},y\rangle)^{2}-(1-\|x_{\alpha}\|^{2})(\gamma^{2}-\|y\|^{2})}}{\sqrt{\gamma^{2}-\|y\|^{2}}\sqrt{1-\|x_{\alpha}\|^{2}}}\right)\\
&\qquad-\log\left(\frac{1+\|x_{\alpha}\|}{\sqrt{1-\|x_{\alpha}\|^{2}}}\right)\\
=&\log\left(\frac{\gamma-\langle x_{\alpha},y\rangle+\sqrt{(\gamma-\langle x_{\alpha},y\rangle)^{2}-(1-\|x_{\alpha}\|^{2})(\gamma^{2}-\|y\|^{2})}}{(1+\|x_{\alpha}\|)\sqrt{\gamma^{2}-\|y\|^{2}}}\right).
\end{align*}
Taking the limit gives us 
$$\xi((\gamma,y))=\log\left(\frac{\gamma-\langle \hat{x},y\rangle+\sqrt{(\gamma-\langle \hat{x},y\rangle)^{2}-(\gamma^{2}-\|y\|^{2})(1-r^{2})}}{(1+r)\sqrt{\gamma^{2}-\|y\|^{2}}}\right).$$
Note that if $r=\|\hat{x}\|<1$, then $\xi=i(1,\hat{x})$. So $r>\|\hat{x}\|$, if $\|\hat{x}\|<1$.

Now suppose that a function is of the form as described above. Note that all we need to do is find a net $((1,x_{\alpha}))$ in $V_{+}$ such that $(x_{\alpha})$ converges weakly to $\hat{x}$ and $(\|x_{\alpha}\|)$ converges to $r$. Then it will give rise to the desired horofunction by the above. If $\|\hat{x}\|=1$, consider the sequence $((1,(1-\frac{1}{n})\hat{x}))$, clearly this sequence converges strongly to $(1,\hat{x})$ and gives rise to a horofunction by the above. If $\|\hat{x}\|<1$, then let $(e_{n})$ be an orthonormal sequence in $H$, which exists as $\dim(H)=\infty$, and consider the sequence $((1,\hat{x}+\sqrt{r^{2}-\|\hat{x}\|^{2}}e_{n}))$. Note that $(\hat{x}+\sqrt{r^{2}-\|\hat{x}\|^{2}}e_{n})$ converges weakly to $\hat{x}$, since $(e_{n})$ converges weakly to 0. Also note that 
$$\lim_{n\rightarrow\infty}\|\hat{x}+\sqrt{r^{2}-\|\hat{x}\|^{2}}e_{n}\|^{2}=\lim_{n\rightarrow\infty}r^{2}+2\sqrt{r^{2}-\|\hat{x}\|^{2}}\langle\hat{x},e_{n}\rangle=r^{2}.$$
\end{proof}

Note that the proof of Theorem \ref{horofunctions spinfactor} also shows that $\xi$ is a Busemann point if $\|\hat{x}\|=1$. We can show that these are the only horofunctions that are Busemann points.

\begin{theorem}\label{Busemann points spinfactor}
Let $(H,\langle\cdot,\cdot\rangle)$ be an infinite dimensional Hilbert space and let $V=\mathbb{R}\oplus H$, let $\hat{x}\in H$ and $\|\hat{x}\|\leq r\leq1$ and let 
$$\xi((\gamma,y))=\log\left(\frac{\gamma-\langle \hat{x},y\rangle+\sqrt{(\gamma-\langle \hat{x},y\rangle)^{2}-(\gamma^{2}-\|y\|^{2})(1-r^{2})}}{(1+r)\sqrt{\gamma^{2}-\|y\|^{2}}}\right)$$
be a horofunction. Then $\xi$ is a Busemann point if and only if $\|\hat{x}\|=r=1$.
\end{theorem}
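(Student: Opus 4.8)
The ``if'' direction is essentially already recorded: as observed just after Theorem~\ref{horofunctions spinfactor}, when $\|\hat x\|=1$ (which by that theorem forces $r=1$) the function $\xi$ is the limit of $((1,(1-\tfrac1n)\hat x))$, a sequence running monotonically outward along a chord of $\textbf D$; such a sequence is a geodesic ray of $\mathbb H^{\infty}$ and in particular almost geodesic, so $\xi$ is a Busemann point. Thus the real content is the ``only if'' direction, and I would establish its contrapositive: if $\|\hat x\|<1$ (hence, by Theorem~\ref{horofunctions spinfactor}, $\|\hat x\|<r\le 1$), then $\xi$ is not a Busemann point.

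So suppose, for contradiction, that $\|\hat x\|<1$ and that there is an almost geodesic net $(x_{\alpha})=((1,z_{\alpha}))$ in $\textbf D$ with $i(x_{\alpha})\to\xi$. Writing $b=(1,0)$, the first step is the standard observation that then $d_{h}(b,x_{\alpha})+\xi(x_{\alpha})\to 0$. To see this, fix $\varepsilon>0$ and let $A$ be the index supplied by the definition of almost geodesic; for $\alpha\ge A$, rewrite the defining inequality as $d_{h}(b,x_{\alpha'})-d_{h}(x_{\alpha},x_{\alpha'})\ge d_{h}(b,x_{\alpha})-\varepsilon$ and let $\alpha'$ range over the cofinal tail $\{\alpha'\ge\alpha\}$. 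Since $i(x_{\alpha'})(x_{\alpha})=d_{h}(x_{\alpha'},x_{\alpha})-d_{h}(x_{\alpha'},b)\to\xi(x_{\alpha})$, the left-hand side tends to $-\xi(x_{\alpha})$, so $d_{h}(b,x_{\alpha})+\xi(x_{\alpha})\le\varepsilon$ for all $\alpha\ge A$; combined with the triangle-inequality bound $d_{h}(b,x_{\alpha})+\xi(x_{\alpha})\ge 0$ this gives the claim.

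The second step is a direct computation. From the gauge values in the proof of Theorem~\ref{horofunctions spinfactor} we have $d_{h}(b,(1,z))=\tfrac12\log\frac{1+\|z\|}{1-\|z\|}$, and combining this with the formula for $\xi$ and simplifying yields
$$d_{h}(b,x_{\alpha})+\xi(x_{\alpha})=\log\left(\frac{1-\langle\hat x,z_{\alpha}\rangle+\sqrt{(1-\langle\hat x,z_{\alpha}\rangle)^{2}-(1-\|z_{\alpha}\|^{2})(1-r^{2})}}{(1+r)(1-\|z_{\alpha}\|)}\right).$$
By the first step this tends to $0$, so the fraction tends to $1$. However, $1-\langle\hat x,z_{\alpha}\rangle\ge 1-\|\hat x\|\,\|z_{\alpha}\|>1-\|\hat x\|=:c>0$, so the numerator is always at least $c$; since the fraction is eventually less than, say, $2$, we get $(1+r)(1-\|z_{\alpha}\|)>c/2$, hence $\|z_{\alpha}\|<1-c/4$, for all large $\alpha$. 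Therefore $d_{h}(b,x_{\alpha})=\tfrac12\log\frac{1+\|z_{\alpha}\|}{1-\|z_{\alpha}\|}\le\tfrac12\log(8/c)$ eventually; that is, the tail of the net $(x_{\alpha})$ is $d_{h}$-bounded.

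Finally, restricting $(x_{\alpha})$ to that cofinal tail produces an almost geodesic net that is bounded in $\mathbb H^{\infty}$; since $\mathbb H^{\infty}$ is complete, Proposition~\ref{complete Busemann} shows it converges to some $x\in\textbf D$, and then continuity of $d_{h}$ gives $\xi(y)=\lim_{\alpha}(d_{h}(x_{\alpha},y)-d_{h}(x_{\alpha},b))=d_{h}(x,y)-d_{h}(x,b)=i(x)(y)$ for every $y$. Thus $\xi=i(x)\in i(\textbf D)$, contradicting that $\xi$ is a horofunction. Hence $\|\hat x\|<1$ is impossible, so $\|\hat x\|=r=1$, which completes the ``only if'' direction. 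The steps I expect to need the most care are the net-theoretic manipulations in the first step (taking limits of the almost geodesic inequality along cofinal tails) and the verification that a bounded tail legitimately triggers Proposition~\ref{complete Busemann}; the algebraic simplification is routine, and pleasantly the argument needs no case split on whether $r<1$ or $r=1$.
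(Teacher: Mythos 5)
Your proposal is correct, and its ``only if'' argument is organised differently from the paper's. The paper first invokes Proposition \ref{complete Busemann} to rule out bounded almost geodesics, concluding the net is unbounded so that $\|x_{\alpha}\|\to 1$ and $r=1$, and then plays the almost-geodesic inequality against the explicit gauge formulas, taking an iterated limit (first in $\alpha'$, then in $\alpha$) to reach the quantitative contradiction $e^{\varepsilon}\geq\lim_{\alpha}\tfrac{1-\langle x_{\alpha},\hat{x}\rangle}{1-\|x_{\alpha}\|}=\infty$ when $\|\hat{x}\|<1$. You instead isolate the general fact that an almost geodesic net converging to $\xi$ satisfies $d_{h}(b,x_{\alpha})+\xi(x_{\alpha})\to 0$ (your first limit is essentially the paper's first limit, done abstractly: for $r=1$ the quantity $d_{h}(b,x_{\alpha})+\xi(x_{\alpha})$ is exactly $\log\tfrac{1-\langle\hat{x},x_{\alpha}\rangle}{1-\|x_{\alpha}\|}$), and then use $\|\hat{x}\|<1$ to bound the numerator below, forcing $\|z_{\alpha}\|$ away from $1$ on a tail; feeding that bounded tail into Proposition \ref{complete Busemann} yields $\xi=i(x)\in i(\textbf{D})$, a structural contradiction with $\xi$ being a horofunction. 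What your route buys: no preliminary reduction to $r=1$, no case split, no iterated limit, and a reusable lemma about almost geodesics; what the paper's route buys: a single self-contained computation that exhibits the divergence explicitly. Two small points you should make explicit if you write this up: $\mathbb{H}^{\infty}$ is a complete metric space (the paper also uses this tacitly when applying Proposition \ref{complete Busemann}), and the restriction of an almost geodesic net to a tail is again almost geodesic and still converges to $\xi$, so Proposition \ref{complete Busemann} indeed applies to it; both verifications are routine.
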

\begin{proof}
In Theorem \ref{horofunctions spinfactor} we already proved that if $\|\hat{x}\|=r=1$, then $\xi$ is a Busemann point. Now suppose that $\xi$ is a Busemann point and let $((1,x_{\alpha}))$ be an almost geodesic net such that $i((1,x_{\alpha}))$ converges to $\xi$. Combining Proposition \ref{complete Busemann} and Theorem \ref{horofunctions spinfactor} gives us that $d_{h}((1,0),(1,x_{\alpha}))$ is not bounded, so $\lim_{\alpha}\|x_{\alpha}\|=r=1$. Note that we can rewrite the horofunction as 
$$\xi((1,y))=\log\left(\frac{1-\langle\hat{x},y\rangle}{\sqrt{1-\|y\|^{2}}}\right).$$
Now suppose $\|\hat{x}\|<1$. Let $\epsilon>0$ and let $A$ be such that for all $\alpha'\geq\alpha\geq A$ we have
$$\varepsilon+d_{h}((1,0),(1,x_{\alpha'}))\geq d_{h}((1,0),(1,x_{\alpha}))+d_{h}((1,x_{\alpha}),(1,x_{\alpha'}))$$
As in the proof of Theorem \ref{horofunctions spinfactor}, using Proposition \ref{Gauge spinfactor} we find
\begin{align*}
\varepsilon\geq&\log\left(\frac{1+\|x_{\alpha}\|}{\sqrt{1-\|x_{\alpha}\|^{2}}}\right)-\log\left(\frac{1+\|x_{\alpha'}\|}{\sqrt{1-\|x_{\alpha'}\|^{2}}}\right)\\
&\qquad+\log\left(\frac{1-\langle x_{\alpha},x_{\alpha'}\rangle+\sqrt{(1-\langle x_{\alpha},x_{\alpha'}\rangle)^{2}-(1-\|x_{\alpha}\|^{2})(1-\|x_{\alpha'}\|^{2})}}{\sqrt{1-\|x_{\alpha'}\|^{2}}\sqrt{1-\|x_{\alpha}\|^{2}}}\right).
\end{align*}
Taking the exponential we find
$$e^{\varepsilon}\geq\frac{1-\langle x_{\alpha},x_{\alpha'}\rangle+\sqrt{(1-\langle x_{\alpha},x_{\alpha'}\rangle)^{2}-(1-\|x_{\alpha}\|^{2})(1-\|x_{\alpha'}\|^{2})}}{(1-\|x_{\alpha}\|)(1+\|x_{\alpha'}\|)}.$$
As this holds for all $\alpha'\geq\alpha$, we can take the limit with respect to $\alpha'$ to get
\begin{align*}
e^{\varepsilon}&\geq\lim_{\alpha'}\frac{1-\langle x_{\alpha},x_{\alpha'}\rangle+\sqrt{(1-\langle x_{\alpha},x_{\alpha'}\rangle)^{2}-(1-\|x_{\alpha}\|^{2})(1-\|x_{\alpha'}\|^{2})}}{(1-\|x_{\alpha}\|)(1+\|x_{\alpha'}\|)}\\
&=\frac{1-\langle x_{\alpha},\hat{x}\rangle}{1-\|x_{\alpha}\|}.
\end{align*}
Finally, as this holds for all $\alpha\geq A$, we can take the limit with respect to $\alpha$ to find
$$e^{\varepsilon}\geq\lim_{\alpha}\frac{1-\langle x_{\alpha},\hat{x}\rangle}{1-\|x_{\alpha}\|}=\infty,$$
which is a contradiction.\\
\end{proof}

Theorem \ref{Main theorem} follows from Theorem \ref{horofunctions spinfactor} and Theorem \ref{Busemann points spinfactor}.

\textit{Acknowledgements.} I would like to thank Bas Lemmens for the advice and help with writing this article.

\footnotesize

\end{document}